\tikzset{bluenode/.style={circle,fill=black!30,minimum size=0.4cm,inner sep=0pt},}
\tikzset{rednode/.style={circle,fill=black!100,minimum size=0.4cm,inner sep=0pt},}
\theoremstyle{plain}
\newtheorem{theorem}{Theorem}[section]
\newtheorem{lemma}[theorem]{Lemma}
\theoremstyle{definition}
\newtheorem{remark}{Remark}[section]
\newcommand{\R}{\mathbb{R}}
\newcommand{\Hidden}[1]{}
\newcommand{\dmin}{d_{\min}}
\begin{document}

\title{Spectral gap of the largest eigenvalue of the normalized graph Laplacian}
\author{
J\"urgen Jost\footnote{MPI MiS Leipzig, jost@mis.mpg.de}
~~~~~~~~
Raffaella Mulas \footnote{MPI MiS Leipzig, Raffaella.Mulas@mis.mpg.de}
~~~~~~~
Florentin M\"unch\footnote{MPI MiS Leipzig, muench@mis.mpg.de}
}
\date{}
\maketitle

\begin{abstract}
We offer a new method for proving that the maximal eigenvalue of the normalized graph Laplacian of a graph with $n$ vertices is at least $\frac{n+1}{n-1}$ provided the graph is not complete and that equality is attained if and only if the complement graph is a single edge or a complete bipartite graph with both parts of size $\frac{n-1}2$. With the same method, we also prove a new lower bound to the largest eigenvalue in terms of the minimum vertex degree, provided this is at most $\frac{n-1}{2}$.
\end{abstract}


\section{Introduction}
Spectral graph theory investigates the fundamental relationships between geometric properties of a graph and the eigenvalues of the corresponding linear operator. A general overview is given in \cite{chung1997spectral}.
In terms of the largest eigenvalue, the normalized graph Laplacian is particularly interesting as it measures how close a graph is to a bipartite graph. 
In this paper, we are interested in the reverse question, i.e., how far from a bipartite graph a graph can be. This translates to giving lower bounds on the largest eigenvalue.
Several bounds are given in
\cite{li2014bounds}. However, the best known estimate so far is $\lambda_n \geq \frac{n}{n-1}$ which is only attained if the graph is a complete graph \cite{chung1997spectral}. Here, $\lambda_n$ denotes the largest eigenvalue and $n$ is the number of vertices.

Naturally, the question arises what is the optimal a-priori estimate for $\lambda_n$ for non-complete graphs. Das and Sun \cite{extremal} proved that for all non-complete graphs one has
\[
\lambda_n \geq \frac{n+1}{n-1},
\]
with equality if and only if the complement graph is a single edge or a complete bipartite graph with both parts of size $\frac{n-1}2$. Here we offer a new method for proving these results, see the proofs of Theorem~\ref{thm:EigenvalueEstimate} and Theorem~\ref{thm:rigidity}. Furthermore, we use this new method for showing that, for a graph with minimum vertex degree $\dmin\leq \frac{n-1}{2}$,
\begin{equation*}
    \lambda_n\geq 1+\frac{1}{\sqrt{\dmin(n-1-\dmin)}}.
\end{equation*}
\section{Eigenvalue estimate}
A graph $G=(V,E)$ consists of a finite non-empty vertex set $V$ and a symmetric edge relation $E \subset V\times V$ containing no diagonal elements $(v,v)$. We write $v\sim w$ for $(v,w) \in E$. 
Let $G=(V,E)$ be a graph with $n$ vertices. The vertex degree is denoted by $d(v):=|N(v)|$ where
$N(v):=\{w\in V : w\sim v\}$.
We also denote
$N_k(v):= \{w \in V: d(v,w)=k\}$ where $d(v,w) := \inf\{n:v=x_0 \sim \ldots \sim x_n =w\}$ is the combinatorial graph distance. We say $G$ is connected if $d(v,w)<\infty$ for all $v,w \in V$. We write $C(V) = \R^V$ and we denote the positive semidefinite normalized Laplacian by
$L:C(V) \to C(V)$; it is given by
\[
Lf(x) := \frac{1}{d(x)}\sum_{y\sim x}(f(x)-f(y)).
\]
We will always assume that $d(x)\geq 1$ for all $x \in V$ as the Laplacian is not well defined otherwise.
The inner product is given by
\[
\langle f, g \rangle := \sum_x f(x)g(x)d(x).
\]
The operator $L$ is self-adjoint w.r.t. this inner product and the eigenvalues of $L$ are
\[
0=\lambda_1 \leq \lambda_2 \leq \ldots \leq \lambda_n.
\]
In this paper, we are interested in estimating the largest eigenvalue $\lambda_n$ which, by the min-max principle, can be written as
\[
\lambda_n = \sup_{f\in C(V) \setminus \{0\}}\frac{\langle Lf,f \rangle}{\langle f,f \rangle}.
\]
It is well known that 
\[
\frac{n}{n-1} \leq \lambda_n \leq 2
\] 
where the first inequality is an equality only for the complete graph, and the latter inequality is an equality only for bipartite graphs.
The following theorem was established in \cite{extremal} and gives the optimal a-priori lower bound on $\lambda_n$ for all non-complete graphs. In contrast to \cite[Theorem~3.1]{extremal}, our proof methods are completely different and allow for an extension of this estimate in terms of the minimal vertex degree, see Section~\ref{sec:boundViaDegree}.
\begin{theorem}[{\cite[Theorem~3.1]{extremal}}]\label{thm:EigenvalueEstimate}
Let $G=(V,E)$ be a non-complete graph with $n$ vertices.
Then,
\[
\lambda_n \geq \frac{n+1}{n-1}.
\]
\end{theorem}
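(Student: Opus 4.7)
The plan is to apply the variational characterisation
\[
\lambda_n = \sup_{f \not\equiv 0} \frac{\sum_{\{x,y\}\in E} (f(x)-f(y))^{2}}{\sum_{x} f(x)^{2}\, d(x)}
\]
with a three-valued test function adapted to the local structure at a vertex of minimum degree. Since $G$ is not complete, a minimum-degree vertex $v$ has $d(v) \leq n-2$ and hence at least one non-neighbour; write $R := V \setminus (\{v\} \cup N(v)) \neq \emptyset$. The key structural feature of the partition $V = \{v\} \sqcup N(v) \sqcup R$ is that there are no edges directly between $\{v\}$ and $R$, so for the ansatz
\[
f = \alpha\,\mathbf{1}_{\{v\}} + \beta\,\mathbf{1}_{N(v)} + \gamma\,\mathbf{1}_{R}
\]
one computes
\[
\langle L f, f \rangle = d(v)(\alpha-\beta)^{2} + e_v(\beta-\gamma)^{2},\qquad
\langle f, f \rangle = \alpha^{2} d(v) + \beta^{2} M + \gamma^{2} W,
\]
where $M := \vol(N(v))$, $W := \vol(R)$ and $e_v := |E(N(v),R)|$.

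Optimising over $(\alpha,\beta,\gamma)$ is a $3 \times 3$ generalised eigenvalue problem. Factoring out the trivial eigenvalue $0$ (corresponding to constants) leaves a quadratic $p(\lambda) = 0$, and the larger root $\lambda^{\ast}_{v}$ is a lower bound for $\lambda_n$. Since $p$ opens upward, the desired inequality $\lambda^{\ast}_{v} \geq (n+1)/(n-1)$ is equivalent to $p((n+1)/(n-1)) \leq 0$. Using $d(v) + M + W = 2|E|$, a direct expansion reduces this to the combinatorial inequality
\[
(n+1)\,W\,\bigl[2M - (n-1)\,d(v)\bigr] \;\leq\; (n-1)\,e_v\,\bigl[4|E| - (n+1)\,d(v)\bigr],
\]
which becomes an equality precisely for the two extremal configurations of the theorem (complement a single edge, or a balanced complete bipartite graph $K_{(n-1)/2,(n-1)/2}$), supporting the choice of $v$ as a minimum-degree vertex.

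The main obstacle will be verifying this combinatorial inequality in full generality. When $d_{\min} \leq (n-1)/2$, the stronger minimum-degree estimate from Section~\ref{sec:boundViaDegree}, $\lambda_n \geq 1 + 1/\sqrt{d_{\min}(n-1-d_{\min})}$, already gives $\lambda_n \geq (n+1)/(n-1)$ because $k(n-1-k) \leq (n-1)^{2}/4$ for $k \in \{1,\dots,n-2\}$. The delicate case is $d_{\min} > (n-1)/2$, where $G$ is very dense and $|R| < (n-1)/2$; here one must exploit the structural rigidity forced by the high minimum degree (in particular, that non-neighbourhoods are small and force certain cliquish substructures) to establish the inequality directly.
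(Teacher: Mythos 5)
Your setup is sound: restricting the Rayleigh quotient to the three-dimensional space of functions constant on $\{v\}$, $N(v)$ and $R=V\setminus(\{v\}\cup N(v))$ does give a valid lower bound for $\lambda_n$ (the form identities you write are correct, since no edge joins $v$ to $R$), and factoring out the constant function correctly reduces the problem to showing $p\bigl(\tfrac{n+1}{n-1}\bigr)\le 0$ for an upward-opening quadratic. But the argument stops exactly where the work begins: the combinatorial inequality
\[
(n+1)\,W\,\bigl[2M-(n-1)d(v)\bigr]\;\le\;(n-1)\,e_v\,\bigl[4|E|-(n+1)d(v)\bigr]
\]
is asserted, not proved. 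You yourself flag it as ``the main obstacle,'' and the only case you actually close is $\dmin\le\frac{n-1}{2}$, by importing the minimum-degree bound of Section~\ref{sec:boundViaDegree}. That import is legitimate in itself (Theorem~\ref{thm:mindeg} is proved independently), but it leaves precisely the dense regime $\dmin>\frac{n-1}{2}$ --- where that theorem does not apply --- resolved only by the phrase ``exploit the structural rigidity forced by the high minimum degree,'' which is not an argument. In that regime you would need genuine control of $e_v=|E(N(v),R)|$ and $M=\vol(N(v))$ from below and above respectively; nothing in the proposal supplies this, so as written the proof is incomplete.

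For contrast, the paper also starts from a vertex $v$ with $d(v)\le n-2$, but instead of averaging over all of $N(v)$ and all of $R$ it picks a single $w\in N_2(v)$ and uses a test function supported on the much smaller set $\{v,w\}\cup\bigl(N(v)\cap N(w)\bigr)$, chosen so that $Lf=\frac{n+1}{n-1}f$ holds exactly at $v$ and $w$. The whole problem then collapses to the pointwise inequality $-Lf(x)\ge\frac{n+1}{n-1}$ on $N(v)\cap N(w)$, which follows from the elementary bound $|N(v)\cap N(w)|\ge 1\vee(d(v)+d(w)+2-n)$ and the sign analysis of a concave quadratic in $D=\frac{d(v)+d(w)}{2}$ with roots at $\frac{n-1}{2}$ and $n-2$; no global edge counts such as $|E|$, $\vol(N(v))$ or $|E(N(v),R)|$ ever enter. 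If you want to salvage your route, the honest remaining task is a self-contained proof of the displayed inequality for every graph with $\dmin>\frac{n-1}{2}$; until then, the claim is not established.
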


\begin{proof}
We first assume that $G$ is connected.
Since $G$ is not complete, there exists a vertex $v$ with $d(v) \leq n-2$. As the graph is connected, we have $N_2(v) \neq \emptyset$.
Let $w \in N_2(v)$. Then, $d(w) \leq n-2$ as $v$ and $w$ are not adjacent. Moreover, $N(v) \cap N(w) \neq \emptyset$.

We write $A:=|N(v) \cap N(w)|$.
We aim to find  a function $f$ with $\langle f,Lf \rangle \geq \frac{n+1}{n-1} \langle f,f\rangle$.
To do so, it is convenient to choose $f$ in such a way that $Lf = \frac{n+1}{n-1} f$ in $v$ and $w$.
Particularly, let $f:V\to \R$ be given by
\[
f(x) := \begin{cases}
-1                &: x \in N(v) \cap N(w),\\
\frac{n-1}2 \frac{A}{d(v)} &: x=v,\\
\frac{n-1}2 \frac{A}{d(w)} &: x=w,\\ 
0&:\mbox{otherwise.}
\end{cases}
\]
We observe
\[
d(v) Lf(v)= d(v) f(v) + A = \frac{n+1}2 A
\]
and thus, $Lf(v)=\frac{n+1}{n-1} f(v)$.
Similarly, $Lf(w)=\frac{n+1}{n-1} f(w)$.
We now claim that $-Lf(x) \geq \frac{n+1}{n-1}$ for all $x \in N(v) \cap N(w)$.
We observe $A \geq 1 \vee (d(v)+d(w)+2-n)$ where $\vee$ denotes the maximum, and we calculate
\begin{align}\label{eq:NvNwNxVSA}
-Lf(x) = \frac{d(x) - |N(x)\cap N(v) \cap N(w)| + f(v) + f(w)}{d(x)} 
&\geq
 1 + \frac{1-A +f(v)+f(w)}{d(x)}. 
\end{align}
As $f(v) + f(w) \geq A$, we can use $d(x)\leq n-1$ and continue
\begin{align}\label{eq:dxVSnMinus1}
\frac{1-A +f(v)+f(w)}{d(x)}  &\geq \frac{1-A}{n-1} + \frac{A}{2d(v)} + \frac{A}{2d(w)} \\
&= \frac{1}{n-1} + A \left(\frac{1}{2d(v)} + \frac{1}{2d(w)}- \frac{1}{n-1} \right). \nonumber
\end{align}
Since $d(v) \leq n-2$ and $d(w) \leq n-2$, we see that the term in brackets is positive and thus,
\begin{align}\label{eq:AEstimate}
A \left(\frac{1}{2d(v)} + \frac{1}{2d(w)}- \frac{1}{n-1} \right) \geq \left[1\vee(d(v)+d(w)+2-n) \right] \left(\frac{1}{2d(v)} + \frac{1}{2d(w)}- \frac{1}{n-1} \right).
\end{align} 
We write $D:=(d(v)+d(w))/2$, and by the harmonic-arithmetic mean estimate, we have $\frac{1}{2d(v)} + \frac{1}{2d(w)} \geq \frac 1 D$ and thus,
\begin{align}\label{eq:dvdwVSD}
A \left(\frac{1}{2d(v)} + \frac{1}{2d(w)}- \frac{1}{n-1} \right) \geq \left[1\vee(2D+2-n)\right] \left(\frac{1}{D} - \frac{1}{n-1} \right).
\end{align}
We aim to show that the latter term is at least $\frac{1}{n-1}$ which, by multiplying with $D(n-1)$ and subtracting $D$, is equivalent to
\begin{align}\label{eq:QuadraticPoly}
\left[1\vee(2D+2-n) \right](n-1-D) -D\geq 0.
\end{align}
If $D \leq \frac{n-1}2$, then the maximum equals $1$ and the inequality follows immediately.
If $D \geq \frac{n-1}2$, then we can discard the ``$1\vee$'', and so the left hand side becomes 
 a concave quadratic polynomial in $D$ with its zero points in $D=n-2$ and $D=\frac{n-1}2$.
Thus, the inequality \eqref{eq:QuadraticPoly}   holds true for all $D$ between the zero points. Moreover by assumption, $D$ has to be between the zero points which proves the claim that $-Lf(x) \geq \frac{n+1}{n-1}$ for all $x \in N(v)\cap N(w)$. Particularly, this shows that $fLf\geq \frac{n+1}{n-1}f^2$. Integrating proves the claim of the theorem for all connected graphs.
For non-connected graphs, the smallest connected component has at most $\frac{n}{2}$ vertices. By the standard estimate $\lambda_n \geq \frac{n}{n-1}$ applied to the smallest connected component, and the fact that the right hand side of the estimate is a decreasing function of $n$, we get
\[
\lambda_n  \geq \frac{n/2}{n/2 - 1} = \frac{n}{n-2} > \frac{n+1}{n-1}
\]
which proves the theorem for non-connected graphs.
\end{proof}

\section{Rigidity}
We now prove that Theorem~\ref{thm:EigenvalueEstimate} gives the optimal bound and we characterize equality in the eigenvalue estimate which can be attained only for two different graphs (Figure \ref{fig:gap}). One of the graphs is the complete graph with only one edge removed. The other graph is surprisingly significantly different. It can be seen as two copies of a complete graph which are joined by a single vertex. Again, our proof methods differ widely from \cite{extremal}.
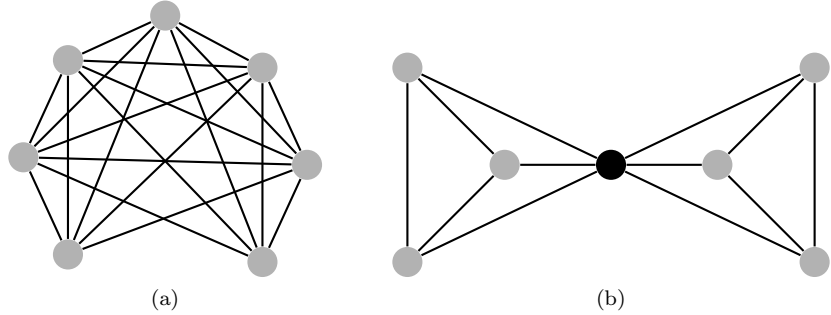
\begin{figure}[ht]
    \centering
     \subfloat[]{{
    \begin{tikzpicture}
			\node[bluenode] (1) {};
			\node[bluenode] (2) [below left = 0.3 cm 											and 1cm of 1]  {};
			\node[bluenode] (3) [below left = 1cm and 											0.3 cm of 2] {};
			\node[bluenode] (4) [below right = 1cm and 										0.3 cm of 3] {};
				\node[bluenode] (5) [below right = 0.4 										cm and 1cm of 1]{};
			\node[bluenode] (6) [below right =1cm and 											0.3 cm of 5] {};
			\node[bluenode] (7) [below left =1cm and 											0.3 cm of 6] {};
			\path[draw,thick]
			(1) edge node {} (2)
			(1) edge node {} (3)
			(1) edge node {} (4)
			(1) edge node {} (5)
			(1) edge node {} (6)
			(1) edge node {} (7)
			(2) edge node {} (3)
			(2) edge node {} (4)
			(2) edge node {} (5)
			(2) edge node {} (6)
			(2) edge node {} (7)
			(3) edge node {} (4)
			(3) edge node {} (5)
			(3) edge node {} (6)
			(3) edge node {} (7)
			(4) edge node {} (5)
			(4) edge node {} (6)
			(5) edge node {} (6)
			(5) edge node {} (7)
			(7) edge node {} (6);
			
			\end{tikzpicture}
					}}
    \qquad
    \subfloat[]{{
    \begin{tikzpicture}
			\node[rednode] (1) {};
			\node[bluenode] (2) [left = of 1]  {};
			\node[bluenode] (3) [below left = of 2] {};
			\node[bluenode] (4) [above left = of 2] {};
				\node[bluenode] (5) [right = of 1]  {};
			\node[bluenode] (6) [below right = of 5] {};
			\node[bluenode] (7) [above right = of 5] {};
			
			\path[draw,thick]
			(1) edge node {} (2)
			(1) edge node {} (3)
			(1) edge node {} (4)
			(1) edge node {} (5)
			(1) edge node {} (6)
			(1) edge node {} (7)
			(2) edge node {} (3)
			(2) edge node {} (4)
			(4) edge node {} (3)
			(5) edge node {} (6)
			(5) edge node {} (7)
			(7) edge node {} (6);
			
			\end{tikzpicture}
				}}	
    \caption{For $n=7$, these are the two graphs in Theorem \ref{thm:rigidity}. The graph on the left is the complete graph $K_7$ with one edge removed. The graph on the right is made by two copies of the complete graph $K_3$, joined by the black vertex in the middle.}
    \label{fig:gap}
\end{figure}
\begin{theorem}[{\cite[Theorem~3.1]{extremal}}]\label{thm:rigidity}
Let $G=(V,E)$ be a graph with $n$ vertices.  T.f.a.e.:
\begin{enumerate}[(i)]
\item
$\lambda_n=\frac{n+1}{n-1}$,
\item
The complement graph of $G$, after removing isolated vertices, is a single edge or a complete bipartite graph with both parts of size $\frac{n-1}2$.
\end{enumerate}
\end{theorem}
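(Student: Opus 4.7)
My plan has two parts. The converse direction (ii) $\Rightarrow$ (i) I would verify by direct spectral computation: both graphs carry a large automorphism group (namely $S_2 \times S_{n-2}$ acting on $K_n$ minus an edge, and $(S_{(n-1)/2} \times S_{(n-1)/2}) \rtimes S_2$ acting on the join graph), and decomposing $C(V)$ into isotypic components under this action reduces the eigenvalue problem to a handful of small explicit systems in which $(n+1)/(n-1)$ appears and dominates the remaining nonzero eigenvalues. The main work goes into the forward direction (i) $\Rightarrow$ (ii), which I would prove by retracing each inequality in the proof of Theorem~\ref{thm:EigenvalueEstimate} under the equality hypothesis.

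The first reduction is that $G$ must be connected, since otherwise the disconnected branch at the end of the proof of Theorem~\ref{thm:EigenvalueEstimate} yields the strict bound $\lambda_n \geq n/(n-2) > (n+1)/(n-1)$. For any pair $(v,w)$ with $d(v) \leq n - 2$ and $w \in N_2(v)$, the proof actually establishes the pointwise inequality $fLf \geq \frac{n+1}{n-1} f^2$ for the test function $f$ it constructs; combining this with $\langle f, Lf \rangle \leq \lambda_n \langle f, f \rangle = \frac{n+1}{n-1} \langle f, f\rangle$ forces equality pointwise on the support of $f$. Retracing the chain of inequalities then produces the following mandatory conditions for every such pair $(v, w)$: $d(v) = d(w) =: D$ (harmonic-arithmetic step in \eqref{eq:dvdwVSD}); $D \in \{(n-1)/2,\, n-2\}$ (the only relevant roots of the quadratic in \eqref{eq:QuadraticPoly}); $A = 1 \vee (d(v) + d(w) + 2 - n)$ (from \eqref{eq:AEstimate}); and $d(x) = n - 1$ for every $x \in N(v) \cap N(w)$ (from \eqref{eq:dxVSnMinus1}), meaning every common neighbor of $v$ and $w$ is a universal vertex.

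A case split on $D$ finishes the argument. If some valid pair $(v,w)$ has $D = n - 2$, then $A = n - 2$ and $v$'s unique non-neighbor distinct from itself must be $w$; consequently $N(v) \cap N(w) = V \setminus \{v,w\}$ consists entirely of universal vertices, which pins down $G = K_n$ minus the edge $vw$. Otherwise every valid pair has $D = (n-1)/2$ (forcing $n$ odd) and $A = 1$, so the unique common neighbor of any such pair is universal, and since $A = 1$ there is exactly one universal vertex $x_0$; applying the conditions to any non-adjacent pair inside $S := V \setminus \{x_0\}$ propagates $d(v) = (n-1)/2$ to every $v \in S$, so the complement $H := \overline{G[S]}$ is $(n-1)/2$-regular on $n-1$ vertices with precisely $(n-1)^2/4$ edges. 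The main obstacle is to identify $H$: for every edge $vw$ of $H$, the condition $N_G(v) \cap N_G(w) = \{x_0\}$ combined with a degree count forces $N_H(v) \cup N_H(w) = S$ and hence $N_H(v) \cap N_H(w) = \emptyset$, so $H$ is triangle-free; Mantel's theorem then uniquely identifies $H$ as $K_{(n-1)/2,(n-1)/2}$, and deleting the isolated vertex $x_0$ from the complement of $G$ yields exactly this structure.
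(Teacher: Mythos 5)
Your proposal is correct, and its skeleton coincides with the paper's: the forward direction is obtained by forcing equality pointwise in every inequality of the proof of Theorem~\ref{thm:EigenvalueEstimate}, which yields exactly the conditions you list ($d(v)=d(w)=D$ from \eqref{eq:dvdwVSD}, $D\in\{\frac{n-1}{2},n-2\}$ from \eqref{eq:QuadraticPoly}, $A=1\vee(d(v)+d(w)+2-n)$ from \eqref{eq:AEstimate}, and universality of common neighbours from \eqref{eq:dxVSnMinus1}), and the case $D=n-2$ is handled identically. The two places where you genuinely diverge are worth noting. First, in the case $D=\frac{n-1}{2}$ the paper identifies the bipartition directly: it fixes the unique universal vertex $x$, sets $P_v=\{v\}\cup N(v)\setminus\{x\}$ and $P_w=\{w\}\cup N(w)\setminus\{x\}$, and shows by reapplying the equality conditions to pairs $(v,\widetilde w)$ plus a degree count that the complement is exactly the complete bipartite graph on $P_v,P_w$; you instead establish that the complement restricted to $V\setminus\{x_0\}$ is $\frac{n-1}{2}$-regular and triangle-free and invoke the uniqueness case of Mantel's theorem. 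Your route imports a classical extremal result but arguably packages the combinatorial endgame more cleanly; the paper's route is self-contained and makes the two parts of the bipartition explicit. Second, for $(ii)\Rightarrow(i)$ the paper simply writes down explicit eigenfunctions (e.g.\ $1_P-1_Q$ and the orthogonal complement of $\{\phi,1\}$ for the bipartite-complement case), whereas you propose an isotypic decomposition under the automorphism group; both are routine, but the paper's explicit functions are more economical, and you would still need to carry out the small-system computations your sketch defers. One minor point to make explicit when writing this up: your condition ``$d(x)=n-1$ for all common neighbours'' subsumes the pairwise-adjacency condition coming from equality in \eqref{eq:NvNwNxVSA}, so nothing is lost, but the reader should see why.
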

\begin{proof}
We first prove $(i)\Rightarrow (ii)$. We first note that $G$ is non-complete but connected by the proof of Theorem~\ref{thm:EigenvalueEstimate}. Thus, all inequalities in the proof of Theorem~\ref{thm:EigenvalueEstimate} must be equalities.
Let $v \not\sim w$ with $N(v) \cap N(w) \neq \emptyset$.
By equality in \eqref{eq:NvNwNxVSA}, all vertices within $N(v) \cap N(w)$ must be adjacent.
By equality in \eqref{eq:dxVSnMinus1}, all vertices of $N(v) \cap N(w)$ must have degree $n-1$. 
By equality in \eqref{eq:AEstimate}, we obtain
\[
|N(v)\cap N(w)| = (d(v)+d(w)+2-n) \vee 1.
\]
By equality in \eqref{eq:dvdwVSD}, we obtain $d(v)=d(w)=D$.
Finally by equality in \eqref{eq:QuadraticPoly}, we see that 
\[
D \in \left\{\frac{n-1}2, n-2 \right\}.
\]
We first show that if $D=n-2$, then the complement graph is a single edge.
If $d(v)=d(w)=D=n-2$, then we get $|N(v)\cap N(w)| = n-2$. Since all vertices within $N(v)\cap N(w)$  are adjacent, we see that the only missing edge is the one from $v$ to $w$ which shows that the complement graph is a single edge.

Now we assume $D=\frac{n-1}{2}$. Then, $A=|N(v) \cap N(w)| = 1$ and we can write $N(v) \cap N(w) = \{x\}$.
We recall that $d(x)=n-1$.
We now specify the parts of the bipartite graph which we want to be the complement graph.
One part is 
\[
P_v := \{v\} \cup N(v)\setminus \{x\}
\]
and similarly, $P_w := \{w\} \cup N(w)\setminus \{x\}$.
Let $\widetilde v \in P_v$ and $\widetilde w \in P_w$.
Then $d(v,\widetilde w) = d(w,\widetilde v) = 2$ as $x$ is adjacent to every other vertex.
By applying the above arguments to the pair $(v,\widetilde w)$, we see that $N(v) \cap N(\widetilde w) = \{x\}$. Particularly, $\widetilde v \not\sim \widetilde w$. Moreover, we have $d(\widetilde w) =d(v)= \frac{n-1}2$ and similarly, $d(\widetilde v) = \frac{n-1}2$. By a counting argument, this shows that 
every $\widetilde v \in P_v$ is adjacent to every vertex not belonging to $P_w$. An analogous statements holds for all  $\widetilde w \in P_w$. Putting everything together, we see that the complement graph of $G$ is precisely the complete bipartite graph with the parts $P_v$ and $P_w$. 
This finishes the case distinction and thus, the proof of the implication $(i)\Rightarrow (ii)$ is complete.

We finally prove $(ii)\Rightarrow (i)$.
We start with the case that the complement graph is the complete bipartite graph. Let the parts be $P$ and $Q$.
Then, $\phi:= 1_P - 1_Q$ is eigenfunction to the eigenvalue $\frac{2}{n-1}$ and every function  orthogonal to $\phi$ and $1$ is eigenfunction to the eigenvalue $\frac{n+1}{n-1}$.

We end with the case that the complement graph is a single edge $(v,w)$. Then, $\phi=1_v - 1_w$ is eigenfunction to eigenvalue $1$, and $\psi = -2 + (n+1)(1_v + 1_w)$ is eigenfunction to eigenvalue $\frac{n+1}{n-1}$. Every function orthogonal to $\phi,\psi$ and $1$ is eigenfunction to the eigenvalue $\frac{n}{n+1}$.

This finishes the proof of $(ii)\Rightarrow (i)$ and thus, the proof of the theorem is complete.
\end{proof}

\begin{remark}
  In  the second equality case in Theorem \ref{thm:rigidity}, for $n>3$, the eigenvalue $\lambda_n$ has multiplicity larger than 1. With the notation of the proof of that theorem, we can take any vertex $v'\in P_v$ and any vertex  $w'\in P_w$ and a function that is $1$ at $v'$ and $w'$, $-1$ at their single joint neighbor $z$, and $0$ everywhere else. For $n=5$, that is when we have two triangles sharing a single vertex $z$. We can also take a function that is $0$ at $z$ and assumes the values $\pm 1$ on the two other vertices of each of the two triangles, to produce other eigenfunctions with eigenvalue $\frac{3}{2}$. 
\end{remark}
\begin{remark}
It is known that, while the eigenvalues of the non-normalized graph Laplacian decrease or stay the same when an edge is removed, the same does not hold for the normalized Laplacian \cite[Remark 9]{interlacing}. The eigenvalues of $L$ may in fact increase when an edge is removed, and in this case \cite[Theorem 8]{interlacing} gives an upper bound to the increase. This is easy to see, for instance, by the fact that $\lambda_n\geq \frac{n}{n-1}$ for every graph, with equality if and only if the graph is complete. Looking at this inequality, however, one may wonder whether the opposite is true for $\lambda_n$, i.e. whether the largest eigenvalue of $L$ increases or stays the same when an edge is removed. The answer is no: Theorem \ref{thm:EigenvalueEstimate} and Theorem \ref{thm:rigidity} offer an example that proves this. In particular, let $G$ be a graph given by two copies of a complete graph which are joined by a single vertex. By Theorem \ref{thm:rigidity}, its largest eigenvalue is $\frac{n+1}{n-1}$. By Theorem \ref{thm:EigenvalueEstimate} and Theorem \ref{thm:rigidity}, any graph given by $G$ with the addition of one edge has largest eigenvalue strictly greater than $\frac{n+1}{n-1}$. Therefore, $\lambda_n$ may as well decrease when an edge is removed.
\end{remark}
\section{Lower bound using the minimum degree}
\label{sec:boundViaDegree}
We now use the same method as that in the proof of Theorem \ref{thm:EigenvalueEstimate} in order to give a new lower bound to the largest eigenvalue in terms of the minimum vertex degree, provided this is at most $\frac{n-1}{2}$. To the best of our knowledge, this is the first known lower bound to $\lambda_n$ in terms of the minimum degree. Li, Guo and Shiu \cite{li2014bounds} proved a bound in terms of the maximum degree. Namely, they have shown that for a graph with $n$ vertices and $m$ edges
\begin{equation*}
    \lambda_n\geq \frac{2m}{2m-\Delta},
\end{equation*}where $\Delta$ is the maximum vertex degree.
\begin{theorem}\label{thm:mindeg}
Let $G=(V,E)$ be a graph with $n$ vertices and let $\dmin$ be the minimum vertex degree of $G$. If $\dmin\leq\frac{n-1}{2}$, then
\begin{equation*}
    \lambda_n\geq 1+\frac{1}{\sqrt{\dmin(n-1-\dmin)}}.
\end{equation*}
\end{theorem}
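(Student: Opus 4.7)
The plan is to mimic the proof of Theorem~\ref{thm:EigenvalueEstimate}, rescaling the test function so that it behaves like an eigenfunction at the target eigenvalue $\lambda := 1 + \mu$, where $\mu := 1/\sqrt{\dmin(n-1-\dmin)}$. First I reduce to the case that $G$ is connected. If $G$ is disconnected, let $C$ be a component containing a vertex of minimum degree and set $n' := |C|$. If $n' = \dmin + 1$, then $C = K_{\dmin+1}$ and $\lambda_n(C) = (\dmin+1)/\dmin \geq 1 + \mu$, using that $\dmin \leq \sqrt{\dmin(n-1-\dmin)}$. If $\dmin + 1 < n' \leq 2\dmin$, then $C$ is non-complete, so Theorem~\ref{thm:EigenvalueEstimate} yields $\lambda_n(C) \geq 1 + 2/(n'-1) > 1 + 1/\dmin \geq 1 + \mu$. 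Otherwise $n' > 2\dmin$, hence $\dmin \leq (n'-1)/2$, and induction on $n$ applied to $C$ gives $\lambda_n(C) \geq 1 + 1/\sqrt{\dmin(n'-1-\dmin)} \geq 1 + \mu$ since $n' \leq n$. In all cases $\lambda_n(G) \geq \lambda_n(C) \geq 1 + \mu$.

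Now assume $G$ is connected. I fix $v$ with $d(v) = \dmin =: d$ and pick any $w \in N_2(v)$, which exists since $d < n-1$ and $G$ is connected. Write $D := d(w)$ and $A := |N(v) \cap N(w)|$; observe that $A \geq \max(1, d+D+2-n)$. The test function I will use is
\[
f(x) := \begin{cases} A/(\mu d) & \text{if } x = v, \\ A/(\mu D) & \text{if } x = w, \\ -1 & \text{if } x \in N(v) \cap N(w), \\ 0 & \text{otherwise,} \end{cases}
\]
which is designed so that $Lf(v) = \lambda f(v)$ and $Lf(w) = \lambda f(w)$ by direct computation. For $x \in N(v) \cap N(w)$, the bounds $|N(x) \cap N(v) \cap N(w)| \leq A-1$ and $d(x) \leq n-1$, applied exactly as in the proof of Theorem~\ref{thm:EigenvalueEstimate}, reduce the pointwise inequality $-Lf(x) \geq \lambda$ to the key inequality
\[
A\left(\frac{1}{d} + \frac{1}{D}\right) \geq \mu^2(n-1) + \mu(A-1).
\]
Once this is proved, $fLf \geq \lambda f^2$ holds pointwise; integrating against $d(\cdot)$ and applying the Rayleigh principle yields $\lambda_n \geq 1 + \mu$.

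The crux is verifying the key inequality. The hypothesis $d \leq (n-1)/2$ gives $1/d \geq \mu$, so the difference of the two sides is linear and strictly increasing in $A$; it therefore suffices to check the inequality at $A_0 := \max(1, d+D+2-n)$. Setting $e := n-1-d$, so that $e \geq d \geq 1$ and $\mu^2(n-1) = 1/d + 1/e$, the case $D \leq e$ gives $A_0 = 1$ and the inequality collapses to $1/D \geq 1/e$, which is the case hypothesis. The case $D > e$ is the delicate one: here $A_0 = D-e+1$, and after substitution the inequality factors as $(D-e)\bigl[1/d + 1/D - \mu - 1/(De)\bigr] \geq 0$. Clearing denominators reduces this to $d(e-1) + D\sqrt{e}\,(\sqrt{e} - \sqrt{d}) \geq 0$, which holds termwise since $e \geq 1$ and $e \geq d$. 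The main obstacle is precisely this algebraic reduction: naively setting $A = 1$ yields a negative value when $D > e$, so one must genuinely exploit the counting bound $A \geq d+D+2-n$, and the particular choice $\mu = 1/\sqrt{d(n-1-d)}$ is exactly what makes the residual factor split as a sum of non-negative terms.
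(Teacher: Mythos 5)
Your proof is correct and follows essentially the same route as the paper: the identical test function (your $\mu$ is the paper's $1/\eta$), the same reduction to the pointwise inequality on $N(v)\cap N(w)$ via $A\ge 1\vee(d(v)+d(w)+2-n)$, and the same case split $d(v)+d(w)\lessgtr n-1$, with your Case-2 algebra (keeping $A_0=d+D+2-n$ exactly and factoring out $D-e$) differing only cosmetically from the paper's (which lower-bounds $A_0$ by $2$ and finishes with AM--GM). Your treatment of the disconnected case is in fact more careful than the paper's one-line reduction, since you explicitly dispose of the components for which the hypothesis $\dmin\le\frac{n'-1}{2}$ fails.
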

\begin{proof}
Let
\begin{equation*}
    \psi:=\psi(n,\dmin):=1+\frac{1}{\sqrt{\dmin(n-1-\dmin)}}\qquad\text{and}\qquad \eta:=\frac{1}{\psi-1}.
\end{equation*}We proceed similarly to the proof of Theorem \ref{thm:EigenvalueEstimate}.
We first assume that $G$ is connected. Let $v$ be a vertex of minimum degree, i.e. such that $d(v)=\dmin$. As the graph is connected, we have $N_2(v) \neq \emptyset$.
Let $w \in N_2(v)$. Then, $d(w) \leq n-2$ as $v$ and $w$ are not adjacent. Moreover, $N(v) \cap N(w) \neq \emptyset$.

We write $A:=|N(v) \cap N(w)|$.
We aim to find  a function $f$ with $\langle f,Lf \rangle \geq \psi \langle f,f\rangle$.
To do so, it is convenient to choose $f$ in such a way that $Lf = \psi f$ in $v$ and $w$.
Particularly, let $f:V\to \R$ be given by
\[
f(x) := \begin{cases}
-1                &: x \in N(v) \cap N(w),\\
\eta \cdot\frac{A}{d(v)} &: x=v,\\
\eta \cdot\frac{A}{d(w)} &: x=w,\\ 
0&:\mbox{otherwise.}
\end{cases}
\]
We observe
\[
Lf(v)= (\eta +1)\cdot\frac{A}{d(v)}=\frac{\eta+1}{\eta}\cdot f(v)=\psi f(v)
\]
and similarly, $Lf(w)=\psi f(w)$.
We now claim that $-Lf(x) \geq \psi$ for all $x \in N(v) \cap N(w)$.
We observe $A \geq 1 \vee (d(v)+d(w)+2-n)$ and we calculate
\begin{align}\label{eq:LfDegreeTheorem}
-Lf(x) = \frac{d(x) - |N(x)\cap N(v) \cap N(w)| + f(v) + f(w)}{d(x)} 
&\geq
 1 + \frac{1-A +f(v)+f(w)}{d(x)}. 
\end{align}
In order to proceed, we will use the following lemma which will be proven later independently.
\begin{lemma} We have
\begin{equation*}
    \frac{\eta}{n-1}\cdot \left[1\vee(d(v)+d(w)+2-n) \right] \Biggl(\frac{1}{d(v)}+\frac{1}{d(w)}-\frac{1}{\eta}\Biggr)\geq \frac{1}{\eta}-\frac{1}{n-1},
\end{equation*}
\end{lemma}
Applying the lemma and using $A \geq 1 \vee (d(v)+d(w)+2-n)$ gives
\begin{align*}
0<\frac{1}{\eta}-\frac{1}{n-1} 
&\leq
\frac{\eta}{n-1}\cdot \left[1\vee(d(v)+d(w)+2-n) \right] \Biggl(\frac{1}{d(v)}+\frac{1}{d(w)}-\frac{1}{\eta}\Biggr) \\
&\leq
\frac{\eta A}{n-1}\cdot \Biggl(\frac{1}{d(v)}+\frac{1}{d(w)}-\frac{1}{\eta}\Biggr).
\end{align*}
Moving $\frac{1}{n-1}$ to the right hand side and using $d(x) \leq n-1$ gives
\begin{align*}
0<\frac{1}{\eta}
&\leq
\frac{1}{n-1} +
\frac{\eta A}{n-1}\cdot \Biggl(\frac{1}{d(v)}+\frac{1}{d(w)}-\frac{1}{\eta}\Biggr)\\
&\leq 
\frac{1}{d(x)} +
\frac{\eta A}{d(x)}\cdot \Biggl(\frac{1}{d(v)}+\frac{1}{d(w)}-\frac{1}{\eta}\Biggr)\\
&=\frac{1-A +f(v)+f(w)}{d(x)}\\
&\leq -Lf(x)-1
\end{align*}
where we used \eqref{eq:LfDegreeTheorem} in the last estimate.
Thus, $-Lf(x) \geq 1+\frac{1}{\eta} = \psi$ for all $x \in N(v) \cap N(w)$.
Integrating gives $\langle Lf, f  \rangle \geq \psi \langle f, f \rangle$ which proves the theorem for all connected graphs. For non-connected graph, we apply the theorem for the connected component containing $v$ and use that $\psi$ is decreasing in $n$.
The proof of the theorem is now complete up to the proof of the lemma. 
\end{proof}

\begin{proof}[Proof of the lemma]
We consider two cases.
\begin{enumerate}
    \item Case 1: $d(v)+d(w)\leq n-1$. Then,
    \begin{equation*}
        1\vee(d(v)+d(w)+2-n)=1
    \end{equation*}and \begin{equation*}
        \frac{1}{d(w)}\geq \frac{1}{n-1-d(v)}.
    \end{equation*}Therefore
   \begin{align*}
      \frac{\eta}{n-1}\cdot\Biggl(\frac{1}{d(v)}+\frac{1}{d(w)}-\frac{1}{\eta}\Biggr)\geq \frac{\eta}{n-1}\cdot\Biggl(\frac{1}{d(v)}+\frac{1}{n-1-d(v)}-\frac{1}{\eta}\Biggr).
   \end{align*}Now, we have that
   \begin{align*}
     &\frac{\eta}{n-1}\cdot\Biggl(\frac{1}{d(v)}+\frac{1}{n-1-d(v)}-\frac{1}{\eta}\Biggr)\geq \frac{1}{\eta}-\frac{1}{n-1}\\
      \iff\\
      &\frac{\eta}{n-1}\cdot\Biggl(\frac{1}{d(v)}+\frac{1}{n-1-d(v)}\Biggr)\geq \frac{1}{\eta}\\
      \iff \\
      &\frac{1}{n-1}\cdot\Biggl(\frac{1}{d(v)}+\frac{1}{n-1-d(v)}\Biggr)\geq \frac{1}{\eta^2}.
   \end{align*}This is true by definition of $\eta$ and it is actually an equality.
   \item Case 2: $d(v)+d(w)>n-1$. Then,
    \begin{equation*}
        1\vee(d(v)+d(w)+2-n)=d(v)+d(w)+2-n\geq 2.
    \end{equation*}Therefore, it suffices to prove that
    \begin{equation*}
        \frac{2\eta}{n-1}\cdot \Biggl(\frac{1}{d(v)}+\frac{1}{d(w)}-\frac{1}{\eta}\Biggr)\geq \frac{1}{\eta}-\frac{1}{n-1},
    \end{equation*}i.e. that
    \begin{equation*}
        \frac{2\eta}{n-1}\cdot \Biggl(\frac{1}{d(v)}+\frac{1}{d(w)}\Biggr)\geq \frac{1}{\eta}+\frac{1}{n-1},
    \end{equation*}that can be re-written as
    \begin{equation*}
        \frac{2}{n-1}\cdot \Biggl(\frac{1}{d(v)}+\frac{1}{d(w)}\Biggr)-\frac{1}{\eta^2}\geq \frac{1}{(n-1)\eta}.
    \end{equation*}
In order to prove it, we use the fact that\begin{equation*}
\frac{1}{\eta^2}=\frac{1}{n-1}\cdot\Biggl(\frac{1}{d(v)}+\frac{1}{n-1-d(v)}\Biggr).
\end{equation*}This implies that
\begin{align*}
&\frac{2}{n-1}\cdot \Biggl(\frac{1}{d(v)}+\frac{1}{d(w)}\Biggr)-\frac{1}{\eta^2}\\
 \geq&\frac{2}{n-1}\cdot \Biggl(\frac{1}{d(v)}+\frac{1}{n-1}\Biggr)-\frac{1}{n-1}\cdot\Biggl(\frac{1}{d(v)}+\frac{1}{n-1-d(v)}\Biggr)\\
    =&\frac{1}{n-1}\cdot\Biggl(\frac{1}{d(v)}+\frac{2}{n-1}-\frac{1}{n-1-d(v)}\Biggr)\\
    =&\frac{1}{n-1}\cdot\Biggl(\frac{(n-1)(n-1-d(v))+2d(v)(n-1-d(v))-d(v)(n-1)}{d(v)(n-1)(n-1-d(v))}\Biggr)\\
    =&\frac{(n-1)^2-d(v)(n-1)+2d(v)(n-1)-2d(v)^2-d(v)(n-1)}{d(v)(n-1)^2(n-1-d(v))}\\
    =&\frac{(n-1)^2-2d(v)^2}{d(v)(n-1)^2(n-1-d(v))}.
\end{align*}Therefore, the inequality that we want to prove becomes
\begin{align*}
&\frac{(n-1)^2-2d(v)^2}{d(v)(n-1)^2(n-1-d(v))}\geq \frac{1}{(n-1)\eta}\\
\iff\\
    &\frac{(n-1)^2-2d(v)^2}{d(v)(n-1)(n-1-d(v))}\geq \frac{1}{\eta} = \frac 1 {\sqrt{d(v)(n-1-d(v))}}\\
    \iff\\
    & (n-1)^2-2d(v)^2\geq (n-1)\sqrt{(n-1-d(v))\cdot d(v)}.
\end{align*}Now, since we are assuming $d(v)\leq \frac{n-1}{2}$, 
\begin{equation*}
    (n-1)^2-2d(v)^2\geq \frac{(n-1)^2}{2}.
\end{equation*}Also, by applying $\sqrt{ab}\leq\frac{a+b}{2}$,
\begin{equation*}
    (n-1)\sqrt{(n-1-d(v))\cdot d(v)}\leq (n-1)\frac{n-1}{2}=\frac{(n-1)^2}{2}.
\end{equation*}Therefore,
\begin{equation*}
    (n-1)^2-2d(v)^2\geq (n-1)\sqrt{(n-1-d(v))\cdot d(v)}.
\end{equation*}
\end{enumerate}
Thus, the proof of the lemma is complete.
\end{proof}

\begin{remark}In the particular case of $\dmin=\frac{n-1}{2}$, Theorem \ref{thm:mindeg} tells us that \begin{equation*}\lambda_n\geq \frac{n+1}{n-1}.\end{equation*}By the second part of Theorem \ref{thm:rigidity} we know that this inequality is sharp.\end{remark}
	\bibliographystyle{alpha}
	\bibliography{Gap2020_01_07}	

\end{document}